\newcommand{\commentout}[1]{}
\newcommand{\R}{\mathbb{R}}
\newcommand{\N}{\mathbb{N}}
\newcommand {\al} {\alpha}
\newcommand {\sg} {\sigma}
\newcommand {\f}   {\frac}
\newcommand {\p}   {\partial}
\newcommand{\dis}{\displaystyle}
\newcommand{\beq}{\begin{equation}}
\newcommand{\eeq}{\end{equation}}
\newcommand{\bea} {\begin{array}{rl}}
\newcommand{\eea} {\end{array}}
\newcommand{\bepa}{\left\{ \begin{array}{l}}
\newcommand{\eepa} {\end{array}\right.}
\newtheorem{theorem}{Theorem}
\newtheorem{remark}[theorem]{Remark}
\newtheorem{proposition}[theorem]{Proposition}
\newcommand{\qed}{{ \hfill
                       {\unskip\kern 6pt\penalty 500 \raise -2pt\hbox{\vrule\vbox to 6pt{\hrule width 6pt
                       \vfill\hrule}\vrule} \par}   }}
\title{Bounds and long term convergence for the voltage-conductance kinetic system arising in  neuroscience}
\author{Xu'an Dou\thanks{Beijing International Center for Mathematical Research, Peking University, Beijing, 100871, China. Email : dxa@pku.edu.cn} \and
Beno\^ \i t Perthame\thanks{Sorbonne Universit{\'e}, CNRS, Universit\'{e} de Paris, Inria, Laboratoire Jacques-Louis Lions UMR7598, F-75005 Paris. 
Email : Benoit.Perthame@sorbonne-universite.fr}
\and 
Delphine Salort\thanks{Sorbonne Universit{\'e}, CNRS, Laboratoire de Biologie Computationnelle et Quantitative, UMR 7238, F-75005 Paris. Email : delphine.salort@sorbonne-universite.fr} 
\and 
Zhennan Zhou\thanks{Beijing International Center for Mathematical Research, Peking University, Beijing, 100871, China. Email : zhennan@bicmr.pku.edu.cn}
}
\date{\today}
\begin{document}
\maketitle
\pagestyle{plain}

{\em In honour of Juan-Luis Vazquez's 75th birthday}

\pagenumbering{arabic}

\begin{abstract} 
The voltage-conductance equation determines the probability distribution of a stochastic process describing a fluctuation-driven neuronal network arising in the visual cortex. Its structure and degeneracy share many common features with the kinetic Fokker-Planck equation, which has attracted much attention recently.

We prove an $L^\infty$ bound on the steady state solution and the long term convergence of the evolution problem towards this stationary state. Despite the  hypoellipticity property, the difficulty is to treat the boundary conditions that change type along the boundary. This leads us to use specific weights in the Alikakos iterations and adapt the relative entropy method.
\end{abstract} 
\vskip .7cm

\noindent{\makebox[1in]\hrulefill}\newline
2010 \textit{Mathematics Subject Classification.} 35B65, 35Q84,  35B40, 92B20.
\newline\textit{Keywords and phrases.} Integrate-and-Fire networks, voltage-conductance Vlasov equation,
neural networks, Fokker-Planck kinetic equation,
%
\section{Introduction}  
\label{sec:intro}

A class of parabolic equations arise in the field of neuroscience to describe the voltage dynamics of assemblies of neurons. They determine the probability distribution of neurons according to various possible variables such as potential, current, conductance. In particular, in the context of  the primary visual cortex (V1), because of slow post-synaptic receptors, it has been  necessary to include the dynamics of conductance~\cite{CTRM06,cai2004effective, RKC}. This led to describe the neurons by the probability density  $p(v,g,t)$ to find neurons  at time $t$ with a membrane potential (voltage) $v$ and a conductance $g$ thanks to the equation (it is usually called the {\em voltage-conductance} equation)
\begin{equation}\label{eq:iftdp}
 \begin{split}
 & \frac{\partial}{\partial t}p(t,v,g)+    \frac{\partial}{\partial v} \left[\big( -g_L v  +g(V_E -v )\big) p(t,v,g)\right]
+ \frac{\partial}{\partial g} \left[ \f{ g_{\rm in} - g }{\sigma_{E}}  p(t,v,g)\right] \\
 \end{split}
\end{equation}
\begin{equation*}
 \begin{split}
& -  \frac{a}{\sigma_E}  \frac{\p^2}{\p g^2} p(t,v,g)  =  0,\quad t>0,\,v\in(0,V_F),\,g\in(0,+\infty).
 \end{split}
\end{equation*}
We refer to~\cite{CTRM06,cai2004effective} for the interpretation of the coefficients.  The firing and the excitatory reversal potentials satisfy $0 < V_F < V_E$ and we use $0$ as the leak  potential, $\sigma_E$ is the time delay constant of the excitatory conductance,  the leak conductance is $g_L >0$, $g_{\rm in} >0$ denotes the conductance induced by input currents and $a>0$ denotes the induced noise level in conductance. For simplicity of analysis, $g_{\rm in}$ and $a$ are  taken to be constant for the rest of the paper while the derivation of the model leads to  more elaborated, nonlinear and time dependent, formulas.
\\

For integrate and fire models, specific boundary conditions are included, which are presented below in a simpler mathematical formalism. Several studies have been devoted  to this equation, its  derivation and
applications,  in particular by~\cite{CCTao,rangan2006maximum,LyT} and to numerical solutions~\cite{RCT, CCTao}. An interesting feature is the behaviour of solutions which can be merely relaxation to a steady state, or oscillatory behaviour in the nonlinear case when $g_{\rm in}=g_{\rm in}(t)$ is defined through the solution $p(t,v,g)$, at least in certain ranges of the parameters as numerically observed in \cite{RCT}. See also a recent study towards understanding such a periodic behavior in \cite{CDZ2022}.
\\ 
 
 \paragraph{Main results.} 
In order to understand the behaviour of solutions, a first step is to study the steady state and to prove global regularity properties beyond the $L^{\f 8 7 -}$ estimate obtained in \cite{PeSa}. 
So we first consider here  the problem to find a solution $p(v,g)$ of  the stationary equation associated with~\eqref{eq:iftdp}, that is, for $(v,g) \in \Omega := (0, V_F) \times (0, \infty)$, we  consider the steady state problem 
\beq
\begin{cases}
\p_v [J_v(v,g) p(v,g)] - \frac{a}{\sigma_E} \p_g  \big[ M(g) \p_{g} [M^{-1}(g) p(v,g)] \big] =0,     \qquad \text{in } \Omega,
\\[5pt]
 p\geq 0, \qquad \int_\Omega p(v,g) dvdg=1,
\\[5pt]
\f{g-g_{\rm in}}{\sg_E} p + \frac{a}{\sigma_E}  \p_{g} p = 0, \quad at \quad g=0,
\\[5pt]
J_v\,  p (V_F,g)= J_v\,  p (0,g)=0, \quad g\leq g_F, \qquad \quad J_v \,p (0,g) = J_v \,p (V_F,g)   \quad g\geq g_F,
\end{cases}
\label{eq:cif}
\eeq 
with the  $v$-flux defined by
$$
J_v(v,g) = -g_Lv + g(V_E-v), \quad \text{and } \quad M(g) = e^{-\f{| g-g_{\rm in} |^2}{2a}}.
$$
Here, the At $g=0$, the boundary condition is simply the standard no-flux condition, and in $v$ it depends on the sign of the drift. At $v=0$ we notice that $J_v(0,g) >0$ and we always need to define an entering flux. At $V_F$, the flux sign   depends on $g$ through the value 
$$
g_F = \f{g_L V_F }{V_E-V_F }, \quad \text{characterized by }  J(V_F, g_F)=0.
$$
When $g < g_F$, then $J_v(V_F, g) <0$ and we impose zero entering flues at both $v=0$ and $v=V_F$. When $g>g_F$, then $J_v(V_F, g) >0$ and we set the entering flux at $v=0$ to be equal to the outgoing flux at $v=V_F$, i.e., $J_v(0,g)p(0,g)=J_v(V_F, g)p(V_F,g)$. Note that this flux equality holds for all $g>0$, which implies the conservation of mass for the corresponding evolution equation \eqref{eq:iftdp}. For interpretations of the boundary condition in a biology context, the readers may refer to \cite{CTRM06,PeSa}.
\\

Our first purpose is to improve the known Lebesgue regularity of the solution and prove 
\begin{theorem} [$L^\infty$ bounds or the stationary state]
The solution $p^*(v,g)$ of equation~\eqref{eq:cif} belongs to $L^\infty \big( (0, V_F)\times (0, \infty)\big)$.
\label{thm1}
\end{theorem}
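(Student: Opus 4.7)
I would prove the $L^\infty$ bound by a weighted Moser--Alikakos iteration, bootstrapping from the known $L^{8/7-}$ estimate of~\cite{PeSa}. The key is to choose weights that simultaneously (i) keep all integrals finite on the unbounded $g$-domain, (ii) make the Ornstein--Uhlenbeck diffusion in $g$ produce a usable coercive quadratic form, and (iii) respect the mixed boundary conditions at $v\in\{0,V_F\}$. Concretely, I would multiply the stationary equation by a test function of the form $p^{2k-1}\omega(g)$ where $\omega(g)=M(g)^{\alpha}$ is a suitable power of the Gaussian $M$, and integrate over $\Omega$. Using the symmetric form $\partial_g[M\partial_g(p/M)]$ of the $g$-operator, integration by parts yields a coercive term proportional to $\int_\Omega \omega\,(\partial_g p^k)^2\, dv dg$, together with lower order corrections from $\partial_g\omega$. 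The no-flux condition at $g=0$ kills the associated boundary term. Integration by parts in $v$ produces the interior contribution $-\tfrac{2k-1}{2k}\int_\Omega (g_L+g)\, p^{2k}\omega\, dv dg$, whose negative sign is essential: it absorbs the linear-in-$g$ growth brought in by the drift of the Fokker--Planck part and by the weight commutator.

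\textbf{Treatment of the $v$-boundary.} The boundary contributions at $v=0,V_F$ have to be split along $g_F$. For $g<g_F$, the zero-flux condition $J_v p|_{v=0,V_F}=0$ makes these terms vanish. For $g>g_F$, the flux equality $J_v(0,g)p(0,g)=J_v(V_F,g)p(V_F,g)$ combined with $0<J_v(V_F,g)<J_v(0,g)$ leaves a boundary term of definite, unfavourable sign at $v=V_F$, proportional to $\int_{g_F}^\infty J_v(V_F,g)p(V_F,g)^{2k}\bigl[1-(J_v(V_F,g)/J_v(0,g))^{2k-1}\bigr]\omega(g)\, dg$. Since this bad trace is localised at $v=V_F$, I would bound it by a trace inequality in the $v$-direction for the bounded interval $(0,V_F)$, then absorb it into the coercive quadratic term, using the smallness factor coming from the large $k$.

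\textbf{Sobolev embedding and iteration.} The resulting energy inequality has the schematic form
\begin{equation*}
\int_\Omega \omega\,(\partial_g p^k)^2\, dv dg \;+\; \int_\Omega (g_L+g)\,\omega\, p^{2k}\, dv dg \;\leq\; C_k \int_\Omega \omega\, p^{2k}\, dv dg.
\end{equation*}
Combining it with a weighted Sobolev--Poincaré inequality in $g$ (the one-dimensional embedding with Gaussian-type weight), together with the boundedness of the $v$-interval, yields an improved integrability statement of the form $\|p\|_{L^{q_k}(\omega)}\leq C(k)\,\|p\|_{L^{r_k}(\omega)}^{\theta_k}$ with $q_k>r_k$ and $\theta_k\in(0,1)$. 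Iterating along a geometric sequence $k_n\to\infty$ and tracking the constants $C(k_n)$ classically produces the uniform $L^\infty$ bound.

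\textbf{Main difficulty.} The hardest point is the combined degeneracy in $v$ (no diffusion at all in the transport direction) and the unboundedness of the $g$-domain: all Sobolev gain has to be extracted from the single direction where the equation is elliptic, and this gain must survive the presence of the Gaussian weight. Calibrating the exponent $\alpha$ of $\omega=M^\alpha$ is therefore delicate --- it must be large enough that all moments remain finite throughout the iteration and small enough that the dissipation $-(g_L+g)$ from the $v$-transport strictly dominates the weight commutator and the lower-order terms generated by the Fokker--Planck structure. Handling the sign-indefinite boundary trace at $v=V_F$ for $g>g_F$ without losing the Moser gain is the second delicate point, since this term is genuinely generated by the non-standard ``recirculation'' boundary condition specific to the integrate-and-fire model.
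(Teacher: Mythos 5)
There is a genuine gap in your proposal: the mechanism by which each Moser step actually \emph{gains} integrability is missing. You correctly identify that all ellipticity lives in the $g$-direction, but a one-dimensional weighted Sobolev (or Sobolev--Poincar\'e) embedding in $g$ cannot, by itself, upgrade the joint $(v,g)$-integrability: from $\int_\Omega \omega\,(\p_g p^k)^2$ and $\int_\Omega \omega\,p^{2k}$ one only gets $\int_0^{V_F}\sup_g p^{2k}\,dv<\infty$ (this is exactly \eqref{B:3}), which is an $L^\infty$ gain in $g$ alone and gives no improvement of the exponent in $v$. The paper closes this hole with a second, independent estimate in the transverse direction: multiplying the equation by $J_v\,p^{q-1}$ yields $\sup_v\int_0^\infty J_v^2\,p^q\,dg\leq Cq\,d_q$ (estimate \eqref{B:4}), and the product of \eqref{B:3} and \eqref{B:4} controls $\iint |J_v p^q|^2$ (estimate \eqref{B:5}); the exponent is then raised by interpolating this against $p^q/|J_v|^\alpha$, which is integrable for $\alpha<1$ because $J_v$ is affine in $g$ (estimates \eqref{est:7} and \eqref{B:6}). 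None of this $J_v$-weighted machinery, nor any treatment of the degeneracy set $\{J_v=0\}$, appears in your plan; without a substitute, the iteration $q\mapsto\beta q$ with $\beta>1$ does not close.

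Two further sign issues would derail the energy identity as you wrote it. First, since $\p_v J_v=-(g_L+g)$, the $v$-transport contributes $+\frac{2k-1}{2k}\iint(g_L+g)p^{2k}\omega$ on the \emph{source} side, not as a coercive term on the left: the correct inequality is $\iint\omega(\p_g p^k)^2\lesssim C_k\iint(g_L+g)\,\omega\,p^{2k}$, which is precisely why the paper must carry the moment weight $(g+g_L)^2$ inside the iterated quantity $d_q$ in \eqref{B:1}. Second, the $v$-boundary term you propose to absorb by a trace inequality is in fact harmless: using the flux equality one gets $J_vp^{q}(V_F)-J_vp^{q}(0)=J_vp^{q}(V_F)\bigl[1-(J_v(V_F,g)/J_v(0,g))^{q-1}\bigr]\geq 0$ because $0<J_v(V_F,g)<J_v(0,g)$, and this nonnegative quantity sits on the same side as the dissipation, so no absorption is needed. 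Your choice of Gaussian weight $\omega=M^\alpha$ is workable in principle but unnecessary; the paper's polynomial weight avoids the large commutator $\p_g\omega\sim -\alpha\frac{g-g_{\rm in}}{a}\omega$ you would otherwise have to calibrate.
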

Not only we prove the $L^\infty$ bound but the method, based on  the celebrated Alikakos iterations~\cite{alikakos79}, allows to establish quantitative estimates. This might be useful because asymptotic problems are relevant, see~\cite{PS2019, KPS2021} for instance and require specific bounds as discussed in Section~\ref{sec:cp}. 
\\

Our second purpose is to study the long time convergence of solutions of the evolution problem towards this steady state and to improve the convergence result for $v$ integrals in~\cite{PeSa}. Our result is also a worthy addition to previous works on the long time behavior of simplified models, \cite{dou2022exponential,CDZ2022}. We have
\begin{theorem} [Long term behaviour]
Assume an initial data $p^0 \leq C^0_+ p^*$ ($p^0$ is a a probability density), then the solution $p(t,v,g)$ of the evolution problem~\eqref{eq:iftdp} associated with equation~\eqref{eq:cif} satisfies $p(t)\leq C^0_+ p^*$ and 
\[
\int_\Omega \left( \frac{p(t,v,g)}{p^*(v,g)} -1\right)^2 p^*(v,g) dv dg \to 0, \quad \text{as } \; t \to \infty.
\]
\label{thm2}
\end{theorem}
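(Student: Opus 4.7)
}

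The plan is to combine a linear comparison principle with the general relative entropy (GRE) method, and to close the argument by a LaSalle-type analysis exploiting the kinetic transport in~$v$.

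First, I would obtain the pointwise bound $p(t) \leq C^0_+ p^*$ by linearity: the function $q := C^0_+ p^* - p$ solves the same evolution problem~\eqref{eq:iftdp} with $q(0) \geq 0$, and the boundary conditions (no-flux at $g=0$, prescribed entering flux in $v$ depending on the sign of $J_v$) preserve nonnegativity. A standard maximum principle for hypoelliptic operators then yields $q \geq 0$ for all $t \geq 0$, so $h := p/p^*$ (well-defined since $p^*>0$ in $\Omega$ by hypoellipticity) is uniformly bounded by $C^0_+$.

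Second, using that $p^*$ solves~\eqref{eq:cif} to cancel the ``$h$ times stationary'' contribution, the equation for $h$ reduces to the transport--diffusion form
\[
p^* \partial_t h + J_v p^* \partial_v h - \frac{a}{\sigma_E}\partial_g\!\left(p^* \partial_g h\right) - \frac{a}{\sigma_E}(\partial_g h)\, M\,\partial_g(M^{-1} p^*)=0.
\]
Multiplying by $(h-1)$ and integrating over $\Omega$ yields, after the interior cancellation triggered by the stationary equation for $p^*$,
\[
\frac{1}{2}\frac{d}{dt}\int_\Omega (h-1)^2 p^* \,dv\,dg = -\frac{a}{\sigma_E}\int_\Omega (\partial_g h)^2 p^* \,dv\,dg + \mathcal{B}(t),
\]
where $\mathcal{B}(t)$ collects the boundary contributions. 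I expect $\mathcal{B}(t)=0$: at $g=0$ the condition $M\partial_g(M^{-1}p)|_{g=0}=0$ (and the analogue for $p^*$), combined with the identity $p^*\partial_g h = \partial_g p - h\,\partial_g p^*$, kills both boundary terms; at $v=0,V_F$ for $g<g_F$ the vanishing of $p$ and $p^*$ together with boundedness of $h$ gives the cancellation; at $v=0,V_F$ for $g>g_F$ the matching $J_v p(0,g)=J_v p(V_F,g)$ together with the same identity for $p^*$ forces $h(0,g)=h(V_F,g)$, whence $[J_v p^*(h-1)^2]_{v=0}^{v=V_F}=(J_v p^*|_{V_F}-J_v p^*|_0)(h-1)^2=0$. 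This boundary analysis, delicate because the $v$-boundary conditions change type at $g=g_F$, is where I expect the main obstacle, and it will reuse the sign structure already exploited in the proof of Theorem~\ref{thm1}.

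Finally, $\mathcal{E}(t):=\int_\Omega (h-1)^2 p^* \,dv\,dg$ is then nonincreasing, so $\mathcal{E}(t)\to L\geq 0$, and the dissipation yields $\int_0^\infty\!\int_\Omega p^*(\partial_g h)^2 \,dv\,dg\,dt<\infty$. For any sequence $t_n\to\infty$, set $h_n(s,v,g):=h(t_n+s,v,g)$; the $L^\infty$ bound of the first step, the dissipation (which forces $\partial_g h_n\to 0$ in $L^2(p^*\,dv\,dg\,ds)$), and a velocity-averaging-type compactness argument applied to the transport in $v$ allow the extraction of a subsequence converging strongly in $L^2_{\mathrm{loc}}$ to a limit $h_\infty$. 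This limit satisfies $\partial_g h_\infty=0$ and $p^*\partial_s h_\infty + J_v p^* \partial_v h_\infty=0$; since $J_v(v,g)$ genuinely depends on $g$ while $h_\infty$ does not, necessarily $\partial_v h_\infty=0$ and then $\partial_s h_\infty=0$, so $h_\infty$ is a constant, fixed to $1$ by mass conservation. Strong convergence then gives $\mathcal{E}(t_n+s)\to 0$ for a.e.\ $s$, and monotonicity of $\mathcal{E}$ forces $L=0$, proving the theorem.
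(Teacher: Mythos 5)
Your overall architecture --- the pointwise comparison $p(t)\le C^0_+p^*$, the general relative entropy identity with $H(h)=(h-1)^2$, the boundary cancellations in $g$ and in $v$ (including the periodic-type matching for $g>g_F$), and the LaSalle-type identification of the limit $h_\infty$ as a $g$-independent, hence $v$- and $t$-independent, function equal to $1$ by mass conservation --- coincides with the paper's proof in Section~\ref{sec:ltc}. The entropy computation, the boundary sign analysis, and the trick of evaluating the limiting transport equation at two values of $g$ are exactly as in the paper.

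The genuine gap is the compactness step. Strong $L^2_{loc}$ convergence of $h_n$ does \emph{not} follow from the $L^\infty$ bound together with $\p_g h_n\to 0$: you still need compactness in $t$ and in $v$, and the phrase ``velocity-averaging-type compactness argument applied to the transport in $v$'' does not supply it. Averaging lemmas give compactness of $g$-averages $\int h_n\phi(g)\,dg$ in $(t,v)$ only when the right-hand side of the transport equation has controlled regularity; here the source involves $\p_{gg}h_n$ and the unbounded coefficient $\p_gp^*/p^*$, the transport coefficient $J_v$ degenerates on the curve $\{J_v=0\}$, the dissipation is weighted by $p^*$ which vanishes at infinity, and the $v$-boundary conditions change type at $g=g_F$ --- so invoking averaging is not a routine step and would need a real proof. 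The paper instead obtains time compactness by differentiating in $t$: $q=\p_tp$ solves the same problem, whence $|\p_tp(t)|\le C^1_+p^*$ \emph{provided} the initial data satisfies the extra hypothesis~\eqref{as:init-h}; compactness in $v$ then follows from the equation for $\p_v(J_vh)$ and the Lions--Aubin lemma, and the extra hypothesis is removed afterwards by a density argument using that the $L^2(p^*dvdg)$ distance between two solutions is nonincreasing (Step 3 of the proof of Theorem~\ref{thm2}). Your sketch misses both the need for this additional regularity of $p^0$ (or a workable substitute) and the final approximation step; without them the extraction of a strongly convergent subsequence, which is the crux of the argument, is unproven. A secondary point: strong $L^2_{loc}$ convergence yields $\mathcal{E}(t_n+s)\to0$ only after controlling the tails in $g$ and the neighbourhood of $\{J_v=0\}$, which the paper does via the uniform bound $h\le C^0_+$ and the integrability of $p^*$; this should be stated explicitly.
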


The main difficulty comes from the lack of ellipticity in~\eqref{eq:cif}, which is short of a second order term in~$v$. The classical H\"{o}rmander theorem~\cite{hormander1967hypoelliptic, hypobookinvite,nier2005hypoelliptic} ensures that the differential operator in~\eqref{eq:cif} is hypoelliptic, which implies the \emph{interior} smoothness of its solution $p$. 
However, such a result is local in nature and to establish quantitative estimates, in particular near the boundary, seems a hard task.  See however~\cite{bernou2021} for the case of the kinetic Fokker-Planck equation.

\paragraph{Connected works.} Similar difficulties arise in several other problems but it seems that none of the methods applies to~\eqref{eq:cif}, in particular to the complex boundary condition in $v$.

The above equation shares common features with the classical Vlasov-Fokker-Planck equation which makes it also attractive for mathematical analysis. In particular because it satisfies the full rank Lie bracket property, it makes it attractive to use the recent methods  related to hypocoercivity \cite{Villani_hypocoercivity, DMS_2015, Herda_Rodrigues, Monmarche2014,BDMMS}. However several major obstacles arise in adapting the various methods proposed in the case of the kinetic Fokker-Planck equation. Among them are the specific structure of the voltage-conductance Integrate and Fire model, the boundary conditions, and from the start the properties of the steady state which are not explicitly available. A more detailed discussion on these difficulties is given \cite{dou2022exponential} where a simplified model is analyzed.

One can also view the steady equation \eqref{eq:cif} as a degenerate elliptic problem, since its second order term is given by $\frac{a}{\sigma_E}\p_{gg}p$. However, in spite of many existing results on degenerate elliptic problems (e.g. \cite{FicheraMR0111931,Oleinik,MR1077278,CattiauxMR1182641}), they do not quite apply to \eqref{eq:cif} due to two points. Firstly, it is usually required a sign condition on the zeroth order term, which is not satisfied in \eqref{eq:cif}. Secondly, most literature treats Dirichlet or Neumann type boundary conditions, which is quite different from the complicated boundary condition in $v$ of \eqref{eq:cif}. Therefore, to prove the $L^{\infty}$ estimate for the steady state, we need to exploit specific structures of \eqref{eq:cif} which goes beyond general results in literature.

\paragraph{Outline of the paper.}
The proof of Theorem~\ref{thm1} is given in the next two sections. We begin with establish a first integrability result in $L^{\frac 4 3 -}$ which allows us to introduce the multiplier compatible with the boundary conditions. Then, in Section~\ref{sec:3}, we show how to iterate the estimate and prove the $L^\infty$ bound. Then, Section~\ref{sec:ltc} is devoted to the long term convergence. We begin with recalling the relative entropy bounds which provide the main tool to prove Theorem~\ref{thm2}. To conclude, we mention several open problems.

\section{The weighted  $L^{\f 43-}$ estimate revisited} \label{sec:2}

We revisit and simplify the method from~\cite{PeSa} for higher integrability and present the main result  of this section in  Proposition~\ref{propL43} below. We present  the multipiers which serve as the foundation for the iterating scheme to be introduced in Section~\ref{sec:3}. As preparations, we recall some basic estimates for the steady state satisfying \eqref{eq:cif}.

By directly integrating the equation in $v$, we see that the $g$-marginal of the steady state is a Gaussian, given by
\beq 
\int_0^{V_F} p(v,g) dv =Z^{-1} M(g) ,
\label{est:1}
\eeq where $Z:=\int_0^{+\infty}M(g)dg$ is a normalization constant. Another immediate observation is the following regularity in $g$ 
\beq\label{est:1.5}
\int_0^{V_F}\hskip-5pt  \int_0^\infty |\p_gp(v,g)|^2\frac{dg dv}{p(v,g)} <+\infty
\eeq
 This estimate \eqref{est:1.5} is proved  by multiplying the equation \eqref{eq:cif} with $\ln p$ and integrating over $v$ and $g$. Combining \eqref{est:1.5} with the decay property of the Gaussian in \eqref{est:1}, one has 
\beq 
\int_0^{V_F}\hskip-5pt  \int_0^\infty e^{\f{g^2}{8a}} |\p_gp(v,g)|dg dv \leq K_1.
\label{est:2}
\eeq
The interested readers can refer to \cite[Section 3.2]{PeSa} for the detailed proofs of \eqref{est:1.5} and \eqref{est:2}.

Next, we introduce our improved perspective to show the higher integrability of the steady state, by which we simplify the approach in~\cite{PeSa} and establish  the following estimates 
\begin{proposition} With $K_1$ defined in \eqref{est:2}, we have
\beq 
\int_0^{V_F} \sup_g  p(v,g) dv \leq K_2:=K_1,
\label{est:3}
\eeq
\beq 
\sup_v \int_0^{\infty}  J_v^2 p(v,g) dg \leq K_3, 
\label{est:4}
\eeq
\beq 
\int_0^{V_F} \hskip-5pt  \int_0^{\infty}  | J_v \, p(v,g)|^2  dg dv  \leq  K_2 K_3 ,
\label{est:5}
\eeq
\beq 
\int_0^{V_F} \hskip-5pt  \int_0^{\infty}   (g+g_{L})^2 p(v,g)^q  dg dv < \infty  \qquad \forall q < \f 4 3 .
\label{est:6}
\eeq
\label{propL43}
\end{proposition}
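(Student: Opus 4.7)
The four bounds will be proved in the order stated, with the bulk of the work going into \eqref{est:4}. For \eqref{est:3}, since $p(v,g)\to 0$ as $g\to\infty$ for every $v\in(0,V_F)$, the fundamental theorem of calculus yields $\sup_g p(v,g)\leq\int_0^\infty|\partial_g p(v,g)|\,dg$; integrating in $v$ and noting $e^{g^2/(8a)}\geq 1$ allows us to invoke \eqref{est:2}, giving $K_2=K_1$.

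For \eqref{est:4} the plan is to control $h(v):=\int_0^\infty J_v^2(v,g)p(v,g)\,dg$ via an ODE in $v$. Using $J_v\partial_v p=\partial_v(J_v p)-(\partial_v J_v)p$ and the stationary equation, one computes
\[
\partial_v(J_v^2 p)=-(g+g_L)J_v p+\frac{a}{\sigma_E}\,J_v\,\partial_g\bigl[M\partial_g(M^{-1}p)\bigr].
\]
After integrating in $g$ and performing one integration by parts on the last term, the boundary contribution at $g=0$ cancels exactly---because the no-flux condition $(g-g_{\rm in})p+a\partial_g p=0$ at $g=0$ is equivalent to $M\partial_g(M^{-1}p)=0$ there---while the boundary at $g=\infty$ is killed by \eqref{est:2}. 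What remains is an explicit formula for $h'(v)$ involving $\int J_v(g+g_L)p\,dg$, $(V_E-v)p(v,0)$, and $(V_E-v)\int(g-g_{\rm in})p\,dg$. Each of these has integrable absolute value on $(0,V_F)$: the middle term by $\int_0^{V_F} p(v,0)\,dv\leq\int\sup_g p\,dv\leq K_2$ from \eqref{est:3}, the others through polynomial moments of $p$ supplied by the Gaussian marginal \eqref{est:1}. Hence $h\in W^{1,1}(0,V_F)$; since $\int_0^{V_F} h\,dv<\infty$ by the same moments, a mean-value argument produces some $v_0$ with $h(v_0)$ finite, and then $\sup_v h\leq h(v_0)+\|h'\|_{L^1(0,V_F)}$ yields \eqref{est:4}.

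The last two bounds are obtained by interpolation. For \eqref{est:5}, the pointwise $|J_v p|^2\leq(\sup_g p)\,J_v^2 p$ combined with Fubini and \eqref{est:3}--\eqref{est:4} gives directly the product $K_2K_3$. For \eqref{est:6} I first note the uniform majorisation $(g+g_L)^2\leq C_1 J_v^2+C_2$ on $\Omega$ (checked by splitting at $g\sim g_F$, where $J_v$ vanishes), reducing the task to bounding $\int_\Omega J_v^2 p^q$ and $\int_\Omega p^q$ for $q<4/3$. The $J_v^2 p^q$ part goes exactly as for \eqref{est:5}, using $p^{q-1}\leq(\sup_g p)^{q-1}$, then \eqref{est:4} and Jensen in $v$ (valid for $q<2$). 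For the $p^q$ part I need a relation between the $v$-marginal $\rho(v):=\int p\,dg$ and $\sup_g p$: splitting $\rho$ at $g=G\geq 2g_F$ and using $\int_{g\geq 2g_F} g^2 p\,dg\leq CK_3$ (which follows from \eqref{est:4} since $J_v^2\geq c^2 g^2$ there) gives $\rho(v)\leq G\sup_g p+CK_3/G^2$; optimising in $G$ produces the interpolation $\rho\leq CK_3^{1/3}(\sup_g p)^{2/3}$. Plugging this into $\int p^q\,dg\leq(\sup_g p)^{q-1}\rho$ and applying Jensen in $v$ closes the estimate precisely when $q-1/3\leq 1$, i.e.\ $q\leq 4/3$.

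The main technical difficulty lies in \eqref{est:4}: the formal calculation leading to $h'(v)$ must be carefully justified, with rigorous use of \eqref{est:2} for the decay at $g=\infty$ and the precise form of the reflection condition for the cancellation at $g=0$; one then has to verify $L^1(dv)$ integrability of each piece, all of which ultimately rests on the earlier bounds \eqref{est:1}--\eqref{est:3}. Once \eqref{est:4} is in hand, \eqref{est:5} is essentially free, and the $4/3$ threshold in \eqref{est:6} emerges naturally from the Chebyshev-type interpolation between $\rho$ and $\sup_g p$ provided by \eqref{est:4}.
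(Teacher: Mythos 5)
Your treatment of \eqref{est:3}, \eqref{est:4} and \eqref{est:5} is correct and essentially identical to the paper's: the fundamental theorem of calculus in $g$ plus \eqref{est:2} for \eqref{est:3}; for \eqref{est:4}, the identity $\partial_v\int_0^\infty J_v^2p\,dg=-\int_0^\infty(g+g_L)J_vp\,dg-\frac{V_E-v}{\sigma_E}\int_0^\infty[(g-g_{\rm in})p+a\partial_gp]\,dg$ (with the $g=0$ boundary term killed by the no-flux condition), whose right-hand side lies in $L^1_v$ thanks to the Gaussian marginal \eqref{est:1} and to $\int_0^{V_F}p(v,0)\,dv\le K_2$; and the pointwise bound $J_v^2p^2\le(\sup_{g'}p)\,J_v^2p$ for \eqref{est:5}.

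The argument for \eqref{est:6}, however, has a genuine gap. After reducing to $\int_\Omega J_v^2p^q$ (which you handle correctly) and $\int_\Omega p^q$, you propose the interpolation $\rho(v)\le CK_3^{1/3}(\sup_gp(v,\cdot))^{2/3}$ by optimising the splitting point $G$ in $\rho\le G\sup_gp+CK_3G^{-2}$. But the tail bound $\int_{g\ge G}g^2p\,dg\le CK_3$ relies on $J_v^2\ge c\,g^2$, which holds only for $G\ge 2g_F$: on $g\in(0,g_F)$, where $J_v$ vanishes, \eqref{est:4} gives no pointwise-in-$v$ control of $p$ itself. The unconstrained optimiser $G_*\sim(K_3/\sup_gp)^{1/3}$ violates this constraint precisely on the set where $\sup_gp$ is large; there you are forced back to $G=2g_F$, i.e.\ $\rho\le C(1+\sup_gp)$, so that $\int p^q\,dg\le C(\sup_gp)^q$, and $\int_0^{V_F}(\sup_gp)^q\,dv$ is not controlled by \eqref{est:3} for any $q>1$. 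The missing ingredient is exactly the one the paper uses: since $g\mapsto J_v(v,g)$ is affine, $\int_0^G|J_v|^{-\alpha}\,dg\le C$ uniformly in $v$ for $0\le\alpha<1$, whence $\int_0^{V_F}\int_0^G p\,|J_v|^{-\alpha}\,dg\,dv\le CK_2$ by \eqref{est:3}; H\"older's inequality between this and the $L^2$ bound \eqref{est:5} on $J_vp$, with $q=\frac2r+\frac1{r'}$ and $\frac2r=\frac{\alpha}{r'}$, then gives $\int\int_0^Gp^q<\infty$ for $q<\frac43$, the endpoint $\alpha=1$, $q=\frac43$ being excluded. Your claim of the closed endpoint $q\le\frac43$ is a symptom of the problem: the degeneracy of $J_v$ is exactly where the (strict) loss from the exponent $2$ of \eqref{est:5} down to $\frac43$ occurs, and any proof at this stage must confront it.
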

We point out that a major issue in~\eqref{est:3}, compared to~\eqref{est:4} is to establish that the $sup$ is inside the integral. This allows us to by-pass  the Besov imbedings used in~\cite{PeSa}. 
\\
\begin{proof}
To prove \eqref{est:3}, we write
\[
p(v,g) =-\int_g^\infty \p_g p(v,g') dg' \leq \int_0^\infty |\p_g p|(v,g') dg',
\]
which also gives
\[
\int_0^{V_F}\sup_g p(v,g) dv \leq \int_0^{V_F} \int_0^\infty |\p_g p|(v,g) dg \leq K_1.
\]

To prove \eqref{est:4}, we write
\[
\p_v [J_v^2 p]-J_v p  \p_v J_v - \frac{a}{\sigma_E} J_v\p_g \big[M(g) \p_{g} [pM^{-1}(g)] \big]=0,
\]
\beq\label{est-J2p}
\p_v \int_0^\infty J_v^2 p dg + \int_0^\infty (g +g_L) J_v p dg + \frac{V_E-v}{\sigma_E} \int_0^\infty \underbrace{ aM(g) \p_{g} [pM^{-1}(g)]}_{(g-g_{in})p + a\p_g p} dg=0,
\eeq
which means that $\p_v \int_0^\infty J^2\,  p dg$ is controlled in $L^1_v$ and since $\int_0^\infty J^2 p dg$ is also controlled in $L^1_v$, it gives~\eqref{est:4}.

Then, to prove \eqref{est:5}, we combine \eqref{est:3} and \eqref{est:4}, and find 
\[
\int_0^{V_F}  \hskip-5pt  \int_0^{\infty}  J_v^2 \, p(v,g)^2 \leq \int_0^{V_F}  \int_0^{\infty} J_v^2 p(v,g )  \sup_{g'} [p(v,g')] dv dg
\]
\[
\leq  \sup_v  \int_0^{\infty} J_v^2 p(v,g )dg \; \int_0^{V_F} \sup_{g'} [ p(v,g')] dv.
\]

Finally, to prove \eqref{est:6}, thanks to \eqref{est:3}, we find immediately that for all $G>0$,  and $0\leq \al < 1$(Lemma~3 of~\cite{PeSa}) 
\beq 
\int_0^{V_F}\hskip-5pt  \int_0^G \f{ p(v,g)}{|J_v|^\al} dg dv \leq \int_0^{V_F} \sup_{g'}  p(v,g')dv   \int_0^G \f{ 1}{|J_v|^\al} dg \leq CK_2 .
\label{est:7}
\eeq
Then, on the one hand, for $q= \f 2 r +\f 1 {r'}$ and $\f 2 r =\f {\al} {r'}$ (e.g., $\al=1$, $r=3$, $q=\f 4 3$), 
\[
\int_0^{V_F} \hskip-5pt  \int_0^G  p^ q dg dv = \int_0^{V_F}\hskip-5pt \int_0^G (J_v^2 p^2)^{\f 1 r} \left( \f p {|J_v|^{\al}} \right)^{\f 1 {r'}}  \leq  \left(\int_0^{V_F} \hskip-5pt \int_0^G J_v^2 p^2\right)^{\f 1 {r}}     \left( \int_0^{V_F} \sup_g p   \int_0^G \f 1 {|J_v|^{\al}}  dg dv \right)^{\f 1 {r'}} 
\]
but the limiting case $\al=1$ is not allowed which gives the strict inequality $q<\frac{4}{3}$.
On the other hand, for $G>g_F$ and with $r=3$ as above, we have using \eqref{est:5} and \eqref{est:2}
\[
\int_0^{V_F} \hskip-5pt  \int_G^\infty  (g+g_{L})^2   p^{\f 4 3} dg dv \leq C  \left(\int_0^{V_F} \hskip-5pt  \int_G^\infty J_v^2  p^2 dg dv  \right)^{\f 1 {r}}   \left(\int_0^{V_F} \hskip-5pt  \int_G^\infty (g+g_L)^2  p dg dv  \right)^{\f 1 {r'}}  < \infty.
\]
\end{proof} \qed

In Proposition \ref{propL43}, we utilize the information in $v$ and $g$ direction respectively in \eqref{est:3} and \eqref{est:4}, a combination of which leads to the $L^2$ estimate for the flux $J_vp$ \eqref{est:5}. This is more direct than the way in \cite{PeSa}, which gains integrability through a Besov injection. Same as \cite{PeSa}, to get a higher integrability for $p$, we have to estimate the flux $J_vp$ first.

To recover the integrability of $p$ from \eqref{est:5}, we need to overcome the singularity due to zeros of $J_v=-g_Lv+g(V_E-v)$. This difficulty is treated in the same way as in \cite{PeSa}. Specifically, note that $J_v(v,g)$ has a positive lower bound if $g\geq G>g_F$, therefore the singularity only shows up for $g$ in a bounded interval $[0,G]$. Then we exploit the linearity of $J_v$, which implies that for a fixed $v$, $|J_v|^{-\alpha}$ is locally integrable in $g$ for $0\leq \al<1$. Hence, together with \eqref{est:3} we prove the integrability of $p/|J_v|^{\alpha}$ on $[0,V_F]\times[0,G]$, which allows us to recover the integrability by H\"{o}lder's inequality. The cost is, that the Lebesgue index is reduced from $q=2$ to $q<\frac{4}{3}$.

We remark that the weight in \eqref{est:4} is carefully chosen since $J_v$ can change sign. We also note that besides the integrability, \eqref{est:6} also gives some moment control in $g$, which is handy for the iteration in the next section. 

\section{Iterating and proof of Theorem \ref{thm1}}
\label{sec:3}

We are now ready to prove Theorem~\ref{thm1},  i.e. the $L^{\infty}$ estimate.

First, we extend the estimation methodology in Section \ref{sec:2} to $L^q,q>1$, which allows us to iterate. Starting from $1<q<\frac{4}{3}$ as in Proposition \ref{propL43}, we can gain higher integrability for all $1<q<\infty$.

To get the $L^{\infty}$ estimate, our strategy is to do the iterations in a more quantitative way. For heuristic purposes, let $a_q:=\|p\|_{L^q}^q$, and we aim to derive estimates of the form
\beq\label{example-est}
a_{2q}\leq C(q)a_q^2,
\eeq where $C(q)$ is a constant depending on $q$. The observation is that, as long as the growth of $C(q)$ in $q$ is at most polynomial, we can derive the $L^{\infty}$ estimate by iterating \eqref{example-est}. Specifically for a given $q$, the sup-limit $$\limsup_{n\rightarrow\infty}(a_{2^nq})^{{1}/{2^nq}}=\limsup_{n\rightarrow\infty}\|p\|_{L^{2^nq}}$$ would be finite, which is elementary to check. 

In the proof, we actually use a weighted version $d_q$ to be defined in \eqref{B:1} instead of $a_q$. And instead of linking $2q$ with $q$ as in \eqref{example-est}, we link $\beta q$ with $q$ where $1<\beta<\frac{4}{3}$ but the essence is the same. 

 With the scheme we have recalled, we now show the iteration procedure, which consecutively justifies higher Lebesgue exponents. And we conclude the proof of the $L^\infty$ estimate in Section~\ref{sec:32}  with the usual argument of Alikakos~\cite{alikakos79}.

\subsection{Iterating}

Departing with $q<\f 4 3$, as established in Proposition~\ref{propL43}, we are going to establish higher Lebesgue exponents for $p(v,g)$, using the notations 
\beq
d_q :=  \int_0^{V_F} \hskip-5pt  \int_0^{\infty}   (g+g_{L})^2 p(v,g)^q  dv dg < \infty .
\label{B:1}
\eeq
Then, we establish that
\begin{proposition} We have for $q>1$ (close to $\frac 4 3$ for the first iteration),
\beq 
q\int_0^{V_F} p(v,0)^q dv +\int_0^{V_F} \hskip-5pt  \int_0^{\infty}  | \p_g p(v,g)^{\f q 2}|^2  dg dv  \leq C q \, d_q , 
\label{B:2}
\eeq
\beq
\int_0^{V_F} \hskip-5pt  \int_0^{\infty} (g+g_L)   | \p_g p(v,g)^{\f q 2}|^2  dg dv  \leq C q \, d_q ,
\label{B:2a}
\eeq
\beq 
\int_0^{V_F} \sup_g  p(v,g)^q dv \leq C q^{\f 12}\, d_q ,
\label{B:3}
\eeq
\beq 
\sup_v \int_0^{\infty}  J_v^2 p(v,g)^q dg \leq C q\, d_q ,
\label{B:4}
\eeq
\beq 
\int_0^{V_F} \hskip-5pt  \int_0^{\infty}  | J_v \, p(v,g)^q|^2  dg dv  \leq  C q^{\frac 32}\, (d_q)^2 ,
\label{B:5}
\eeq
\beq 
\int_0^{V_F} \hskip-5pt  \int_0^{\infty}   (g+g_{L})^2 p(v,g)^{\beta q}  \leq C q^\beta\, (d_q)^\beta ,  \qquad \forall 0< \beta < \f 4 3 .
\label{B:6}
\eeq
\end{proposition}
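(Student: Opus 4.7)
The plan is to extend the strategy of Proposition~\ref{propL43} from $q=1$ to general $q>1$ by testing the stationary equation~\eqref{eq:cif} against well-chosen nonlinear multipliers, while quantitatively tracking the $q$-dependence of every constant.

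For \eqref{B:2}, I would test \eqref{eq:cif} against $p^{q-1}$. Writing $p^{q-1}\partial_v[J_v p]=\tfrac{1}{q}\partial_v[J_v p^q]-\tfrac{q-1}{q}(g+g_L)p^q$, the $v$-flux contributes the bulk term $-\tfrac{q-1}{q}\int\!\!\int (g+g_L)p^q$, controlled by $d_q$, plus the $v$-boundary integral $\tfrac{1}{q}\int [J_v p^q]_0^{V_F}dg$. Controlling the sign of this boundary integral is the main obstacle: on $\{g<g_F\}$ the two no-flux conditions force $J_v p^q=0$; on $\{g>g_F\}$, setting $F(g)=J_v(0,g)p(0,g)=J_v(V_F,g)p(V_F,g)$, one has $J_v(V_F,g)p(V_F,g)^q-J_v(0,g)p(0,g)^q=F(g)^q\bigl[J_v(V_F,g)^{1-q}-J_v(0,g)^{1-q}\bigr]\ge 0$, since $0<J_v(V_F,g)<J_v(0,g)$ and $1-q<0$; thus this boundary integral has favorable sign and can be discarded. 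On the $g$-side, the Robin condition makes the $g$-flux vanish at $g=0$; integration by parts then produces $\tfrac{4a(q-1)}{q^2\sigma_E}\int\!\!\int |\partial_g p^{q/2}|^2$, and a further integration by parts of the remaining $(g-g_{\rm in})\partial_g p^q$ term produces the positive contribution $\tfrac{(q-1)g_{\rm in}}{q\sigma_E}\int p(v,0)^q dv$. Rearranging and multiplying by $q^2/(q-1)$ yields \eqref{B:2}. Repeating the computation with the weighted multiplier $(g+g_L)p^{q-1}$ gives \eqref{B:2a}: the $v$-bulk term is now $\tfrac{q-1}{q}d_q$ and the $g$-part produces the desired weighted gradient, with the lower-order contributions absorbed via \eqref{B:2} and by $d_q$.

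For \eqref{B:3}, I exploit the decay $p(v,g)\to 0$ as $g\to\infty$ to write $p^q(v,g)=-2\int_g^\infty p^{q/2}\partial_g p^{q/2}\,dg'$; Cauchy--Schwarz first in $g$ then in $v$ gives $\int\sup_g p^q\,dv\le 2\bigl(\int\!\!\int p^q\bigr)^{1/2}\bigl(\int\!\!\int|\partial_g p^{q/2}|^2\bigr)^{1/2}\le Cq^{1/2}d_q$ by \eqref{B:2}. For \eqref{B:4}, I would test against $J_v p^{q-1}$; the algebraic identity $J_v p^{q-1}\partial_v[J_v p]=\tfrac{1}{q}\partial_v[J_v^2 p^q]+\tfrac{q-2}{q}J_v(\partial_v J_v)p^q$, together with an integration by parts in $g$ whose boundary term at $g=0$ again vanishes, yields $\partial_v\!\int J_v^2 p^q\,dg=R(v)$ with $\|R\|_{L^1_v}\le Cqd_q$. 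Since $|J_v|\le C(g+g_L)$ on $[0,V_F]$, also $\int\!\!\int J_v^2 p^q\le Cd_q$, and the elementary fact that $f\in L^1_v$ with $\partial_v f\in L^1_v$ implies $f\in L^\infty_v$ gives \eqref{B:4}.

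Then \eqref{B:5} follows as in the $q=1$ case from $\int\!\!\int J_v^2 p^{2q}\le\bigl(\sup_v\!\int J_v^2 p^q dg\bigr)\bigl(\int\sup_g p^q dv\bigr)$ combined with \eqref{B:3}--\eqref{B:4}. For \eqref{B:6}, I split the $g$-integral at $g=G>g_F$: choose $r>3$, $r'=r/(r-1)$, $\alpha=2/(r-1)<1$, so that $\beta=2/r+1/r'=1+1/r<4/3$. On $[0,G]$, the uniform local integrability of $|J_v|^{-\alpha}$ in $g$ lets Hölder give $\int\!\!\int_0^G(g+g_L)^2 p^{\beta q}\le C\bigl(\int\!\!\int J_v^2 p^{2q}\bigr)^{1/r}\bigl(\int\sup_g p^q\!\int_0^G|J_v|^{-\alpha}dg\,dv\bigr)^{1/r'}$; on $[G,\infty)$, the non-degeneracy $|J_v|\ge c(g+g_L)$ lets me replace $(g+g_L)^2$ by $CJ_v^2$ and factor $J_v^2 p^{\beta q}\le (J_v^2 p^{2q})^{1/r}(J_v^2 p^q)^{1/r'}$, the last factor being bounded by $Cd_q$. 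Tracking the powers of $q$ through both estimates yields \eqref{B:6}. The essential quantitative point is that each of \eqref{B:2}--\eqref{B:6} grows only polynomially in $q$, which is exactly what the Alikakos-type iteration of Section~\ref{sec:32} will require to reach the $L^\infty$ bound.
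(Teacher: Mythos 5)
Your proof follows essentially the same route as the paper: the multipliers $p^{q-1}$, $(g+g_L)p^{q-1}$ and $J_v p^{q-1}$, the sign analysis of the $v$-boundary term on $\{g>g_F\}$, the double Cauchy--Schwarz argument for \eqref{B:3}, the $L^1_v$ control of $\partial_v\int J_v^2 p^q\,dg$ for \eqref{B:4}, and the H\"older splitting at $G>g_F$ for \eqref{B:6} are exactly the paper's steps, with the same polynomial tracking of the $q$-dependence. The only slip is in your treatment of \eqref{B:4}: the $g=0$ boundary term does not vanish (the Robin condition kills the flux of $p$, not of $p^q$); it equals $g_L v\,(1-\tfrac1q)\tfrac{g_{\rm in}}{\sigma_E}\,p^q(v,0)$, which is nonnegative and controlled by the first term of \eqref{B:2}, so the estimate is unaffected.
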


\begin{proof}
To prove \eqref{B:2}, we mutiply equation~\eqref{eq:cif} by $p^{q-1}$ and find
\[
\p_v \f{J_vp^q}{q} - \f{q-1}{q}p^q\,  \left(\frac{1}{\sigma_E}+g+g_L\right) -\p_g \left[ \frac{g-g_{\rm in}}{\sigma_E} \f{p^q}{q} +\frac{a}{\sigma_E}\p_{g} \f{p^q}{q} \right]+ \frac{a}{\sigma_E} \f{4(q-1)}{q^2} \left(\p_g p^\f{q}{2} \right)^2=0 .
\]
It remains to integrate in $(v,g)$ and notice that the $v$ boundary conditions  give a non-negative contribution, since as before for $g>g_F$
\[
J_vp^q(V_F)- J_vp^q(0) = J_vp(V_F) p^{q-1}(V_F) \left[1- \f{p^{q-1}(0)}{p^{q-1}(V_F)}\right] = J_v p^q(V_F)\left[1- \left(\f{J_v(V_F,g)}{J_v(0,g)} \right)^{q-1}\right] \geq 0,
\]
as well as the zero-flux condition is $g$ because, for all $v$, at $g=0$ we may write
\beq\label{flux-g}
\frac{-g_{\rm in}}{\sigma_E} \f{p^q}{q} +\frac{a}{\sigma_E}\p_{g} \f{p^q}{q} =p^{q} (1-\frac 1 q) \frac {g_{\rm in}}{\sigma_E} \geq 0.
\eeq
The second inequality~\eqref{B:2a}, follows by multiplying the  above identity on $p^q$  by $g+g_L$. After integrating by parts and using the signs of the boundary terms, we obtain 
\begin{align*}
\frac{a}{\sigma_E} \f{4(q-1)}{q^2}\int_0^{V_F} \hskip-5pt  \int_0^{\infty}  (g+g_L)  \left(\p_g p^\f{q}{2} \right)^2 \leq &\int_0^{V_F} \hskip-5pt  \int_0^{\infty}  (g+g_L) p^q\,  \left(\frac{1}{\sigma_E}+g+g_L\right)
- \int_0^{V_F} \hskip-5pt  \int_0^{\infty}\frac{g-g_{\rm in}}{\sigma_E} \f{p^q}{q}
\\
&  -\frac{a}{\sigma_E}\int_0^{V_F} \hskip-5pt  \int_0^{\infty} \p_g(p^q)dvdg.
\end{align*}
The last term can be controlled from the boundary term in~\eqref{B:2} 
\[
-\int_0^{V_F} \hskip-5pt  \int_0^{\infty} \p_g(p^q)dvdg= \int_0^{V_F} p^q(v,0)dv\leq C d_q .
\]
And altogether, one finds 
\[
 \f{4(q-1)}{q^2} \int_0^{V_F} \hskip-5pt  \int_0^{\infty}  (g+g_L) \left(\p_g p^\f{q}{2} \right)^2 \leq C d_q.
\]

Then \eqref{B:3} is proved as before. We use twice the Cauchy-Schwarz inequality departing from
\[
p^q(v,g) =- \int_g^\infty \p_g p^q(v,g') dg' \leq 2\left(  \int_0^\infty |\p_g p^{\f q 2}|^2 (v,g') dg' \int_0^\infty p^q  (v,g') dg' \right)^{\f 1 2 }.
\]

For \eqref{B:4}, we write
\[
\p_v \f{J_v^2 p^q}{q} - J_v p^q\,  \left[ \f{q-1}{q \sigma_E}+(g+g_L)  \f{q-2}{q}\right] - J_v \p_g \left[ \frac{g-g_{\rm in}}{\sigma_E} \f{p^q}{q} + \frac{a}{\sigma_E}\p_{g} \f{p^q}{q} \right]+ \frac{a}{\sigma_E} J_v\f{4(q-1)}{q^2} \left(\p_g p^\f{q}{2} \right)^2=0 .
\]
Integrating in $g$ gives, for all $v\in (0,V_R)$,
\begin{align*}
\p_v\int_0^{\infty}   \f{J_v^2 p^q}{q} dg = &\int_0^{\infty}   J_v p^q\,  \left[ \f{q-1}{q \sigma_E}+(g+g_L)  \f{q-2}{q}\right] dg- \frac{a}{\sigma_E} \int_0^{\infty}  J_v\f{4(q-1)}{q^2} \left(\p_g p^\f{q}{2} \right)^2 dg 
\\
& - (V_E-v)  \int_0^{\infty}  \left[ \frac{g-g_{\rm in}}{\sigma_E} \f{p^q}{q} + \frac{a}{\sigma_E}\p_{g} \f{p^q}{q} \right]dg  + g_Lv(1-\frac 1 q) \frac {g_{\rm in}}{\sigma_E}p^{q}(v,0),
\end{align*}
 where the last term comes from the flux at $g=0$ as in \eqref{flux-g} and can be controlled by \eqref{B:2}.

Therefore, with a constant $C$ independent of $q$, 
\[
\left \| \p_v\int_0^{\infty}   J_v^2 p^q dg \right \|_{L^1(0, V_F)} \leq C q \, d_q + C  \int_0^{V_F} \hskip-5pt  \int_0^{\infty}  (g+g_L) \left(\p_g p^\f{q}{2} \right)^2 dg dv,
\]
which, thanks to \eqref{B:2} and the boundness  of $ \int_0^{V_F} \hskip-5pt  \int_0^{\infty} J_v^2 p^q dg dv$ gives  \eqref{B:4}. 
\\

Next, we recall that  \eqref{B:5} follows from  \eqref{B:3} and  \eqref{B:4} writing 
\[
\int_0^{V_F} \hskip-5pt  \int_0^{\infty}  | J_v \, p(v,g)^q|^2  dg dv  \leq \int_0^{V_F} \sup_g  p(v,g)^q dv \, \sup_v \int_0^{\infty}  J_v^2 p(v,g)^q dg \leq C q^{\frac{3}{2}} \, (d_q)^2.
\]

Finally, we conclude \eqref{B:6} as before because, on the one hand, for $G > g_F$, we have, always with $\beta q = \f{2q}{r}+ \f{q}{r'}$,  $\f 2 r =  \f \alpha  {r'}$, $0< \al <1$ (and $\beta \lessapprox \f 4 3 $ means $r  \gtrapprox 3$) 

\begin{align*}
\int_0^{V_F} \hskip-5pt  \int_0^{G} p(v,g)^{\beta q}  dg dv  &\leq \left( \int_0^{V_F} \hskip-5pt  \int_0^{G}  | J_v^2 p(v,g)^{2q}| dg dv\right)^{\f 1 r } 
\left( \int_0^{V_F} \hskip-5pt  \int_0^{G}   \f{p(v,g)^{q}}{|J_v|^\alpha}    \right)^{\f 1 {r'} } 
\\
& \leq \left( \int_0^{V_F} \hskip-5pt  \int_0^{G}  | J_v^2 p(v,g)^{2q}| dg dv\right)^{\f 1 r } \left( \int_0^{V_F} \sup_g p(v,g)^{q} dv \, \int_0^{G}   \f{1}{|J_v|^\alpha}  dg  \right)^{\f 1 {r'} } 
\\
& \leq  C q^\beta  (d_q)^\beta .
\end{align*}
And on the other hand, we have 
\begin{align*}
\int_0^{V_F} \hskip-5pt  \int_{G}^\infty (g+g_L)^2  p(v,g)^{\beta q}  dg dv  &\leq C \left( \int_0^{V_F} \hskip-5pt   \int_{G}^\infty  | J_v^2 p(v,g)^{2q}| dg dv\right)^{\f 1 r } 
\left( \int_0^{V_F} \hskip-5pt  \int_{G}^\infty  (g+g_L)^2 p(v,g)^{q}   \right)^{\f 1 {r'} } 
\\
& \leq  C [q^2   (d_q)^2]^{\f 1 r }  (d_q)^{\f 1 {r'} }  .
\end{align*}
Notice that the dependency on the exponent $q$ can be traced more accurately at the expense of more complex expressions, but the final results will not be altered.
\end{proof} \qed

\subsection{The $L^\infty$ bound} \label{sec:32}

We are now ready to conclude the proof of Theorem~\ref{thm1}.
\\

Iterating, we write for any $k\in \N$, $k>1$, $q=\beta^k$ and still with  $\beta \lessapprox \f 4 3 $,
\[
d_{\beta^{k+1}} \leq C  \beta^{k \beta}   d_{\beta^{k}}^\beta,
\]
\[
\big( d_{\beta^{k+1}} \big)^{1/\beta^{k+1}} \leq \big( C  \beta^{k \beta}\big)^{1/\beta^{k+1}}   \big( d_{\beta^{k}}^\beta\big)^{1/\beta^{k+1}} = \big( C_1  \beta^{k}\big)^{1/\beta^{k}}   \big( d_{\beta^{k}}\big)^{1/\beta^{k}}.
\]
In other words, we have found that  for all $K$
\[
\| p \|_{L^{\beta^{K}}} \leq \dis \prod_{k=1}^{K-1}   \big( C_1  \beta^{k}\big)^{1/\beta^{k}}   \;   \| p \|_{L^{\beta}} \leq C_2 \| p \|_{L^{\beta}} ,
\]
because the sequence 
\[
\ln  \prod_{k=1}^{K-1}   \big( C_1  \beta^{k}\big)^{1/\beta^{k}} =\sum _{k=1}^{K-1} \f{\ln C_1 + k\; \ln \beta}{\beta^{k}} 
\]
converges. We conclude that 
\[
\| p \|_{L^\infty} \leq C_2 \| p \|_{L^{\beta}} ,
\]
and the bound is proved.

This completes the proof of Theorem \ref{thm1}.
\qed

\section{Long term convergence} \label{sec:ltc}

The knowledge of properties of the steady state $p^*$ guides us to use relative entropy methods for the evolution problem~\eqref{eq:iftdp}. Here we explain the relative entropy estimate, together with a compactness method~\cite{RefMMP}, in order to prove the long term convergence to the steady state, still in the linear case. In the following, we choose $a=\sigma_E=1$ to simplify notations. 

Also, we assume that the unique steady state solution $p^*$ of~\eqref{eq:cif}, normalized as a probability, is positive and smooth away from the boundary $g \in (0,g_F)$ with $v=0$ or $v=V_F$. Thanks to the hypoellipticity of the operator, these properties follow from the maximum principle for degenerate equations \cite{bony1969principe,hill1970sharp}.

\subsection{Relative entropy}

We define the functions
$$
h(t,v,g):= \f{p(t,v,g) }{p^*(v,g)}, \qquad h^0(v,g):= \f{p^0(v,g) }{p^*(v,g)}\in L^\infty \big((0,V_F)\times (0,\infty) \big),
$$
where $p^0$, a probability density, denotes the initial data for~\eqref{eq:cif}.  

It is immediate to compute that $h$ satisfies the following equation (in strong form)
\begin{equation}\label{eq:h}
	\p_th+J_v\p_vh+\left(g_{\rm in} - g\right)\p_gh=[\p_{gg}h+\f{2}{p^*}\p_gp^*\p_gh], \qquad t > 0, \; (v,g)\in \Omega,
\end{equation} 
with the following boundary conditions for $t>0$
\begin{equation}
	h(t,0,g)=h(t,V_F,g),\quad  g>g_F,\qquad \p_g h(v,0)=0, \quad v\in(0,V_F).
\end{equation}
For a convex $C^1(\R; \R)$ function $H$, the chain rule gives
\begin{align*}
	\p_t H(h)+J_v\p_v H(h)&+\left(g_{\rm in} - g\right)\p_g H(h)=[\p_{gg}h+\frac{2}{p^*}\p_g p^*\p_g h]H'(h) \notag
	\\
	&= \p_{gg}H(h) +\frac{2}{p^*}\p_g p^*\p_g H(h) - H''(h(t,v,g))|\p_gh(t,v,g)|^2. 
\end{align*}
Notice that this is the same equation than for $h$ with an additional term on the right hand side. Therefore, when we 
multiply by $p^*$ and integrate on $\Omega$, we deduce
\begin{equation}\label{entropy-key}
  \frac{d}{dt}\int_{\Omega} H(h(t,v,g))p^*(v,g)dvdg=-\int_{\Omega} H''(h(t,v,g))|\p_gh(t,v,g)|^2p^*dvdg\leq 0.
\end{equation}
The integral on the left hand side is often referred to as a relative entropy. As is well-known, this dissipation gives us controls of $h$ or $p$, see \cite[Chapter 8]{perthame2015parabolic}. For instance we  find non-increasing weighted Lebesgue norms  $L^q$ ($1\leq q\leq \infty$). In particular we get a priori estimate in $L^{\infty}_t \big(L^{2}(\Omega)\big)$. These estimates can be translated back to $p$ under the form (here the $C^0_\pm$ are the initial controls)
\begin{equation}
C^0_- \,p^* \leq p(t) \leq C_+^0 \, p^*.
\label{hinfini}
\end{equation}
Therefore the  $L^{\infty}$ bound on $p^{*}$ of Theorem~\ref{thm1}, also provides us with a $L^{\infty}$ bound on $p$.

\subsection{Long term convergence}

Although the dissipation is not coercive, we can get convergence following a compactness method in~\cite[Section 3.6]{BP07}. The long term behaviour is replaced by understanding the limit as $k \to \infty$ of the sequences
\beq\label{def-pkhk}
p_k(t,v,g):=p(t+k,v,g), \qquad h_k(t,v,g):=h(t+k,v,g).
\eeq
From the bounds \eqref{hinfini}, we know that $h_k$ is bounded in $L^\infty\big((0,\infty)\times \Omega\big)$ and thus we may extract subsequences such that 
\[
h_{k(n)}(t,v,g) \rightharpoonup h_\infty, \quad \text{as } n\to \infty.
\]
To prove that the limit of $p_k$ is $p^*$ is equivalent to prove that $h_\infty =1$. This is what we establish in the following
\begin{proposition}
Let $p^*$ be the unique steady state. Assume $h^0 \in L^\infty (\Omega)$, and there is a constant $C^1_+$ such that
\begin{equation}\label{as:init-h}
\big|\p_v [J_v(v,g) p^0(v,g)] - a\p_g  \big[ M(g) \p_{g} [M^{-1}(g) p^0(v,g)] \big] \big| \leq C^1_+ p^*.
\end{equation}
Then, $h_\infty=1$, i.e., the sequence $p_k$ converges to the unique steady state $p^*$ and the convergence is almost everywhere and in all $L^q_{loc}\big((-T,T)\times \Omega \big)$ with $ 1 \leq q < \infty$.
\label{prop5}
\end{proposition}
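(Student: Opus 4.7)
}

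The strategy combines weak-* compactness of the shifted sequence $\{h_k\}$, entropy dissipation and the drift structure to identify any weak limit as the constant $1$, and then uses the time-derivative bound from assumption \eqref{as:init-h} together with hypoelliptic interior regularity to upgrade to a.e./$L^q_{loc}$ convergence. In detail, by \eqref{hinfini} the sequence $\{h_k\}$ is uniformly bounded in $L^\infty((-T,T)\times\Omega)$, so Banach--Alaoglu provides a subsequence $h_{k_n}\rightharpoonup h_\infty$ weak-*. The entropy identity \eqref{entropy-key} with $H(h)=(h-1)^2$ gives $\int_0^\infty\!\!\int_\Omega|\p_g h|^2 p^*\,dv\,dg\,dt<\infty$, hence $\|\p_g h_{k_n}\|_{L^2((-T,T)\times\Omega,\, p^*dv\,dg\,dt)}\to 0$; weak lower semicontinuity forces $\p_g h_\infty\equiv 0$.

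Passing to the distributional limit in equation \eqref{eq:h}, the $\p_g$ and $\p_{gg}$ terms drop out, giving
\[\p_t h_\infty + J_v(v,g)\,\p_v h_\infty = 0 \quad\text{on } (-T,T)\times\Omega.\]
Applying $\p_g$ to this identity and using $\p_g h_\infty=0$ yields $(V_E-v)\,\p_v h_\infty=0$; since $V_F<V_E$, this forces $\p_v h_\infty=\p_t h_\infty=0$, so $h_\infty$ is a constant. Mass conservation for \eqref{eq:iftdp}, $\int_\Omega p_k(t)\,dv\,dg=1=\int_\Omega p^*\,dv\,dg$, passes to the limit thanks to $p_k\leq C^0_+\, p^*\in L^1(\Omega)$ (dominated convergence), fixing the constant at $h_\infty=1$. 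Since this identification is independent of the subsequence, the whole sequence $h_k$ converges weakly-* to $1$.

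For the upgrade to a.e./$L^q_{loc}$ convergence, assumption \eqref{as:init-h} amounts to $|\p_t p(0)|\leq C^1_+\, p^*$; since $\p_t p$ solves the same linear equation as $p$, the comparison argument underlying \eqref{hinfini} propagates this to $|\p_t p(t)|\leq C^1_+\, p^*$ for all $t$, hence $|\p_t h_k|\leq C^1_+$ uniformly in $k,t$. This time-equicontinuity, combined with the hypoellipticity of the operator in \eqref{eq:h} (H\"ormander's theorem) which yields uniform $C^\infty_{loc}$ regularity of $\{h_k\}$ on compact interior subsets of $(-T,T)\times\Omega$, allows Arzel\`a--Ascoli to deliver a locally uniformly convergent subsequence; uniqueness of the weak limit upgrades convergence to the full sequence, and the $L^q_{loc}$ statement follows by dominated convergence using the $L^\infty$ bound.

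The main obstacle is the degeneracy: the dissipation controls only $\p_g h$, giving no direct regularity in $v$. The argument succeeds only because $\p_g J_v=V_E-v\neq 0$ on $(0,V_F)$, which converts the $g$-independence of $h_\infty$ into full constancy via the differentiated transport identity. A secondary delicacy is that quantitative hypoelliptic regularity up to the $v$-boundary (which changes type at $g=g_F$) is not readily available, so strong convergence is asserted only locally in $\Omega$.
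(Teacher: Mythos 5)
Your proposal is correct in substance and coincides with the paper's proof in its first half: the identification of $h_\infty$ via entropy dissipation ($\p_g h_\infty=0$), the passage to the limiting transport equation, the differentiation in $g$ (the paper equivalently subtracts the equation at two values $g_1>g_2$) using $\p_g J_v=V_E-v\neq 0$ to force constancy, and the normalization by mass conservation are all the same. Where you genuinely diverge is the upgrade to strong local convergence. The paper proceeds by hand: the bound $|\p_t h|\leq C^1_+$ from assumption \eqref{as:init-h} gives time compactness, the equation rewritten as $\p_v(J_v h)=\cdots$ gives $v$-compactness of $J_v h_k$ via Lions--Aubin, and one then divides by $J_v$ (which vanishes only on a null set) using the $L^\infty$ bound. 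You instead invoke interior hypoelliptic regularity of the operator in $(t,v,g)$ plus Arzel\`a--Ascoli. This route is workable --- the H\"ormander bracket condition holds in the interior and the coefficients (including $2\p_g p^*/p^*$) are fixed and smooth there --- but two remarks are in order. First, what you need is not the qualitative hypoellipticity statement but the quantitative subelliptic estimates, applied with constants uniform along the sequence $\{h_k\}$ (which works here since $Lh_k=0$ and $h_k$ is uniformly bounded in $L^2_{loc}$, but you should say so); the paper deliberately avoids leaning on such machinery and its more elementary argument is self-contained. Second, in your scheme the hypoelliptic bootstrap already controls $\p_t h_k$ locally, so assumption \eqref{as:init-h} becomes essentially superfluous for your compactness step, whereas in the paper's argument it is genuinely needed to launch the Lions--Aubin lemma; this is a trade-off worth flagging (your approach proves a formally stronger locally uniform convergence under weaker hypotheses, at the price of heavier analytic input). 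A minor point: the normalization $h_\infty=1$ follows from testing the weak-$*$ convergence against $p^*\in L^1(\Omega)$, not from dominated convergence, though the conclusion is the same.
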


\begin{proof}
{\em Step 1. Entropy dissipation.}
We choose that $H(h)=(h-1)^2$ then $H''(h)=2$. From \eqref{entropy-key} we have
\begin{equation}
\frac{d}{dt}\int_{\Omega} H(h(t,v,g))p^*(v,g) dvdg=-2\int_{\Omega} |\p_g h(t,v,g)|^2 p^* dvdg \leq 0.
\end{equation}
For convenience we introduce a fixed time $T>0$ 
and, for $k > T$, we define the sequence 
\begin{equation}
I_k:=2\int_{-T}^{T}dt\int_{\Omega}  |\p_gh_k(t,v,g)|^2p^* dvdg.
\end{equation}
Then, because the integral $\int_{0}^{\infty} \int_{\Omega}  |\p_gh(t,v,g)|^2p^* dvdg dt$, controlled by the initial entropy due to \eqref{entropy-key}, is finite, we have  
\begin{equation}
	 I_k \leq 2\int_{k-T}^{\infty}\int_{\Omega}  |\p_gh(t,v,g)|^2p^* dv dg dt
	 \rightarrow0,\quad k\rightarrow\infty.
\end{equation}
By the weak convergence and the convexity, we find that $\p_g h_\infty\in L^2\left( (-T,T)\times \Omega; p^*dtdvdg\right) $ and
\[
\int_{-T}^{T} \int_{\Omega}  |\p_g h_\infty (t,v,g)|^2p^* dv dg dt \leq \liminf_{k \to \infty} I_k =0.
\]
Thus we conclude that $h_\infty$ does not depend on $g$, which allows us to denote  $h_\infty:= F (t,v)$.
\\[2pt]
{\em Step 2. We prove $h_{\infty}\equiv 1$.}
 Since $h_{\infty}$  also satisfies Equation~\eqref{eq:h} in the distribution sense, we get, for a.e. $g>0$,
\begin{equation}\label{tmp}
	\p_t F(t,v)+(-g_Lv+g(V_E-v))\p_v F(t,v)=0,\quad t\in (-T,T), \; v \in (0, V_F).
\end{equation}
Then we can pick some $g_1>g_2>0$ in\eqref{tmp}, subtract one equation from the other, and get $\p_v F(t,v)=\p_t F(t,v)=0$. Therefore $F$ is constant and by the conservation of mass (preserved in the weak limit with the test function $1$ and using~\eqref{est:1}), we get $h_{\infty}\equiv 1$. 
\\[2pt]
{\em Step 3. Time compactness.}
Consider $q(t,v,g)=\partial_t p(t,v,g)$. Because the coefficients of Equation~\eqref{eq:iftdp} are independent of time for the case at hand, $q$ satisfies the same equation, and using the assumption on the initial data $p^0$, we conclude that $|q^0| \leq C^1_+ p^*$. Therefore, from the relative entropy, we infer that 
\[
| \partial_t p(t)|= |q(t)| \leq C^1_+ p^*.
\]
From this, we conclude that $ \partial_t h$ is bounded, and this provides us with the time compactness both for $h_k$ and $p_k$.
\\[2pt]
{\em Step 4. Compactness in $v$.}
We rewrite the equation for $h$, \eqref{eq:h}, as
\[
\p_v(J_vh)=[\p_{gg}h+\f{2}{p^*}\p_gp^*\p_gh]-\p_th - \left(g_{\rm in} - g\right)\p_gh + h \p_vJ_v, \qquad t > 0, \; g >0, \; v \in (0, V_F).
\]  Then together with the compactness of $h_k$ in $t$ and $g$, by the Lions--Aubin Lemma we deduce the (local) compactness of $J_vh_k$.
\\[1pt]
{\em Step 5. Convergence.} Thanks to the local compactness of $J_vh_k$ in all directions $g,\,t$ and $v$, we can extract a strongly convergent subsequence in $L^2_{loc}$. And the limit must be $J_vh_{\infty}$ owing to the unique weak limit of $h_k$, established in Step 2. Hence, the full sequence $J_vh_k$ converges to $J_vh_{\infty}$ in $L^2_{loc}$. This implies the convergence of $h_k$ in $L^2_{loc}$, since $J_v=-g_Lv+g(V_E-v)$ vanishes only on a zero measure set and $h_k$ has the $L^{\infty}$ bound as in \eqref{hinfini}.

Finally, using the $L^{\infty}$ bound of $h$ in~\eqref{hinfini}, we improve the convergence of $h_k$ to all $L^q_{loc}$. And via the $L^{\infty}$ bound of $p^*$ and the interior smoothness of $p^*$ we also obtain the $L^{q}_{loc}$ convergence of $p_k$ to $p^*$.
\end{proof}\qed

\subsection{Proof of Theorem \ref{thm2}}

We can improve the convergence in Proposition \ref{prop5} to prove Theorem \ref{thm2}.

Let us denote the entropy by
\[
G(h):=\int_{\Omega} H(h(v,g))p^*(v,g)dvdg,
\] where we take $H(h)=(h-1)^2$ as in the proof of Proposition \ref{prop5}. Then, from the entropy dissipation~\eqref{entropy-key},  we know $G(h(t))$ is decreasing in time. We shall show that $\lim_{t\rightarrow+\infty}G(h(t))=0$, hence proving Theorem~\ref{thm2}.

\begin{proof}
{\em Step 1. Integrated in time convergence.} First we assume the initial data satisfies the assumption \eqref{as:init-h} in Proposition \ref{prop5}. Therefore by Proposition \ref{prop5}, we obtain that $h_k(t,k,g)=h(t+k,v,g)$ converges to $h_{\infty}\equiv 1$ in $L^2_{loc}$, as a function of $t,v,g$. For some fixed $T>0$, the local convergence of $h_k$, together with its $L^{\infty}$ bound \eqref{hinfini} and the integrability of $p^*$, implies
\[
\begin{aligned}
    \int_{k}^{k+T}G(h(t))dt&=\int_{0}^{T}G(h_k(t))dt
    \\&=\int_{0}^{T}\int_{\Omega}H(h_k(t,v,g))p^*(v,g)dtdvdg\rightarrow 0,\qquad k\rightarrow\infty.
\end{aligned}
\]
\\[1pt]
{\em Step 2. Pointwise convergence.} Thanks to that $G(h(t))$ is decreasing in time, we deduce for $t\geq T$
\[
0\leq G(h(t))\leq \frac{1}{T}\int_{t-T}^t G(h(s))ds\rightarrow0,\qquad t\rightarrow\infty.
\]
Therefore Theorem \ref{thm2} is proved, provided that the initial data additionally satisfies \eqref{as:init-h}.\\[1pt]
{\em Step 3. General initial data.} For two solutions $h$ and $\bar{h}$, we have
\beq\label{eq:tria}
\bigl[G(h(t))\bigr]^{1/2}\leq \bigl[G(\bar{h}(t))\bigr]^{1/2}+\left[\int_{\Omega} (h(t,v,g)-\bar{h}(t,v,g))^2p^*(v,g)dvdg\right]^{1/2},
\eeq which follows from the triangle inequality for the weighted norm $L^2(p^*dvdg)$. Thanks to the linearity, $h-\bar{h}$ also satisfies \eqref{eq:h}. By taking $H(h)=h^2$ in the entropy dissipation \eqref{entropy-key}, we get that the last term in \eqref{eq:tria} is also decreasing in time. In addition, it is clear that when the initial value is smooth and compactly supported, \eqref{as:init-h} is naturally satisfied due to the positivity of $p^*$. Hence, by a density argument we conclude the case with general initial data satisfying $p^0 \leq C^0_+ p^*$.
\end{proof}\qed

\begin{remark}
Combining Theorem \ref{thm2} with the $L^{\infty}$ bound of $h$ \eqref{hinfini}, we get the convergence in all weighted norm $L^q(p^*dvdg)$ for $1\leq q<+\infty$.
\end{remark}
\begin{remark}
Notice that the arguments developed here improve the convergence result to a strong sense, which can be used in other models, e.g. \cite{fu2021fokker}.
\end{remark}

\section{Conclusion and perspectives} \label{sec:cp}

We have extended the methods in \cite{PeSa} in order to prove~$L^\infty$ bounds on the stationary solution of the linear voltage-conductance model for neuronal networks arising in the visual cortex. We have also established the long term convergence of the solution of the evolution problem to this steady state, thanks to the method of relative entropy. 
\\

Several questions remain open. A major question is to prove the exponential convergence, a route being to adapt the methods used for the kinetic Fokker-Planck equation (see the introduction). Also, in our theory, we took for granted the positivity and regularity of the stationary solution, which follows from the hypoellipticity of the underlying operator. In view of the numerous possible asymptotic problems of interest, it might be useful to prove quantitative positivity estimates.  
\\

Concerning the stationary problem, another viewpoint is to see the voltage $v$ as a time variable, and treat Equation~\eqref{eq:cif} as a time-evolution problem. In this way, it resembles a forward-backward parabolic equation in literature \cite{paronetto2004existence,paronetto2020further}. However, the unique boundary condition in $v$ of this model \eqref{eq:cif} has not been treated in this line of literature, as far as we know.
\\


Besides, two asymptotic limits for the voltage-conductance model are of interests: the vanishing noise limit $a\rightarrow0^+$ and the fast conductance limit $\sigma_E\rightarrow0^+$. Therefore, it is interesting to reexamine the estimates for the steady problem \eqref{eq:cif}, to see its dependence on $a$ and $\sigma_E$. However, a direct investigation shows that the bounds we have derived are not uniform for $a\rightarrow0^+$ or $\sigma_E\rightarrow0^+$. Therefore, substantial work is still needed to justify the asymptotic limits of such a kinetic model. 

\section*{Acknowledgment.} BP has received funding from the European Research Council (ERC) under the European Union's Horizon 2020 research and innovation programme (grant agreement No 740623). DS has received support from ANR ChaMaNe No: ANR-19-CE40-0024. ZZ is supported by the National Key R\&D Program of China, Project Number 2021YFA1001200, and the NSFC, grant Number 12171013. XD is partially supported by The Elite Program of
Computational and Applied Mathematics for PhD Candidates in Peking University. XD thanks Zhifei Zhang for helpful discussions.


%
%
%

\end{document}